\documentclass[preprint,12pt]{elsarticle}
\usepackage{amsmath,amssymb,amsthm}
\usepackage{xcolor}
\usepackage{graphicx}
\usepackage{geometry}
\usepackage{caption}
\usepackage{hyperref}
\usepackage{cases}
\usepackage{multirow}

\newtheorem{theorem}{Theorem}[section]
\newtheorem{lemma}[theorem]{Lemma}

\newtheorem{remark}{Remark}

\DeclareMathOperator*{\esup}{ess\ sup}

\journal{...}

\begin{document}

\begin{frontmatter}

\title{On the geometric Brownian motion  with state-dependent \\  variable exponent diffusion term}
\author{Mustafa Avci}

\affiliation{organization={Faculty of Science and Technology, Applied Mathematics\\  Athabasca University},
            city={Athabasca},
            postcode={T9S 3A3},
            state={AB},
            country={Canada}}
\ead{mavci@athabascau.ca (primary) & avcixmustafa@gmail.com}

\begin{abstract}
We propose a new  stochastic model involving state-dependent variable exponent $p(\cdot)$ which allows modeling of systems where noise intensity adapts to the current state. This new flexible theoretical framework generalizes both the geometric Brownian motion (GBM) and the Constant-Elasticity-of-Variance (CEV) models. We prove an existence-uniqueness theorem. We obtain an upper-bound approximation for the model-to-model pathwise error between our model and the GBM model as well as test its accuracy through analytical and numerical error estimates. A detailed comparison of the It\^o and Stratonovich interpretations for the proposed model is presented in the Appendix.
\end{abstract}

\begin{keyword} Stochastic process; state-dependent variable exponent; stopping time; moment; the geometric Brownian motion; the Constant-Elasticity-of-Variance model.
\MSC[2008] 60G07; 60H15; 60H20; 60H30
\end{keyword}

\end{frontmatter}

\section{Introduction}\label{Sec1}
We study the existence and uniqueness of the solutions for the following nonlinear stochastic differential equation with state-dependent variable exponent $p(\cdot)$ in the diffusion
\begin{align}\label{eq.1}
dX(t) &= \mu X(t)dt + \sigma X(t)^{p(X(t))}dW(t),\,\, t\in [0,T] \nonumber \\
X(0)&=x_0, \tag{{${\mathcal{P}}$}}
\end{align}
where the initial condition $X(0) = x_0$ is a random variable; $W(t):=\{W(t),\, t\geq 0\}$ is a standard Brownian motion; $p(\cdot)$ is a real-valued function of the state variable $X(t)$ with the state space $(0,\infty)$; and $\mu ,\sigma> 0$ are real parameters.\\

The proposed model, \eqref{eq.1}, is a generalization of the Constant-Elasticity-of-Variance (CEV) model (excluding the case $0\leq \gamma<1$)
\begin{align} \label{eq.1a}
dX(t) &=\mu X(t))dt + \sigma X(t)^{\gamma}dW(t),\, \gamma \geq 0,\, \mu,\sigma> 0
\end{align}
and the Geometric Brownian Motion (GBM) model
\begin{align} \label{eq.1b}
dX(t) &=\mu X(t))dt + \sigma X(t)dW(t).
\end{align}
The GBM model, a foundational tool in modern finance  \cite{bs1973}, is defined by a constant log-volatility.While mathematically tractable, this assumption fails to capture empirically observed phenomena like the leverage effect or volatility smiles. Our model addresses this limitation directly, the state-dependent variable exponent function $p(\cdot)$  allows volatility to adapt to the asset's state, providing significant  flexibility.\\
The CEV model \cite{cr76} improves upon the GBM model by introducing a constant exponent $\gamma$, creating a static power-law relationship between volatility and price. However, a key weakness of the CEV model is that for $\gamma>1$  and a positive drift $\mu$, the second moment can blow up in finite time, whereas our hypotheses $(\mathbf{p_1})$-$(\mathbf{p_3})$ force linear growth and thus ensures the existence of a finite second moment for the solution for all time.\\
In our model, we keep the GBM’s linear drift but replace the diffusion $x \to x$ by the state-dependent elasticity $x \to x^{p(x)}$. We define a class of variable exponent functions, $\mathcal{S}$, satisfying the hypotheses $(\mathbf{p_1})$-$(\mathbf{p_3})$. Under the hypotheses $(\mathbf{p_1})$-$(\mathbf{p_3})$, we prove Lipschitz and a linear growth conditions for $x^{p(x)}$, which yield pathwise-unique strong solutions with continuous paths and an a-priori $L^2$ bound. Moreover, the model  can be efficiently computed using standard discretization techniques, such as the log-transformed Milstein or Euler-Maruyama schemes, which ensure both numerical stability and efficiency. With the state-dependent structure, we believe that our model can be used for modeling various real-world phenomena such as volatility clustering in finance, density-dependent noise in biology, and heterogeneous diffusion in physics.

\section{Preliminaries and auxiliary results} \label{Sec2}
We start with some basic concepts of the measure-theoretic probability and the theory of stochastic process. For more details, we refer the reader to monographs \cite{bz98,ovidiu22,friedman75,mao07,oksendal03}.\\
Let $W(t)$ be a Brownian motion on a probability space $(\Omega, \mathcal{F}, \mathbb{P})$ under the real-world measure $\mathbb{P}$. Let $\mathcal{F}_t$, $t\geq 0$, be an increasing family of $\sigma$-fields, called a \emph{filtration}, denoted by $\{\mathcal{F}_t\}_{t \geq 0}$. Then $(\Omega, \mathcal{F}, \{\mathcal{F}_t\}_{t \geq 0}, \mathbb{P})$ is called a \emph{filtered probability space}.\\
A \emph{stochastic process} is an indexed collection of random variables $X(t):=\{X(t):\, a\leq t \leq b\}$, with $0\leq a < b< \infty$. $X(t)$ is said to be \emph{adapted} to $\mathcal{F}_{t}$ if for each $t \in [a,b]$, $X(t)$ is $\mathcal{F}_{t}$-measurable.\\
We denote by $\mathcal{L}^{q}[a,b]$ ($1\leq q \leq \infty$) the class of real-valued $\mathcal{F}_{t}$-adapted stochastic processes $X(t)$  satisfying
\begin{align*}
\mathbb{P}\left\{\int_{a}^{b}|X(t)|^{q}dt<\infty \right\}&=1 \, \text{ if }\, 1\leq q < \infty,\,
\mathbb{P}\left\{\esup_{a \leq t \leq b}|X(t)|<\infty \right\}&=1\, \text{ if }\, q=\infty.
\end{align*}
By $\mathcal{M}^{q}[a,b]$ we denote the subset of $\mathcal{L}^{q}[a,b]$ consisting of all stochastic processes $X(t)$ satisfying
\begin{equation*}
\mathbb{E}\left[\int_{a}^{b}|X(t)|^{q}dt\right]<\infty \, \text{ if }\, 1\leq q < \infty \quad \text{and} \quad \mathbb{E}\left[\esup_{a \leq t \leq b}|X(t)| \right]<\infty\, \text{ if }\, q=\infty,
\end{equation*}
Consider the $1$-dimensional stochastic differential equation of It\^{o} type
\begin{equation}\label{eq.2}
dX(t) = \mu(X(t),t)dt + \sigma(X(t),t)dW(t), \quad t\in[t_0,T],
\end{equation}
with the initial value $X(t_0)=X_0 \in \mathbb{R}$ is an random variable. The equation \eqref{eq.2} is equivalent to the following stochastic integral equation
\begin{equation}\label{eq.3}
X(t) = X_0 + \int_{t_0}^{t} \mu(X(s),s)ds + \int_{t_0}^{t} \sigma(X(s),s)dW(s), \quad t\in[t_0,T],
\end{equation}
where $\mu:\mathbb{R}\times[t_0,T]\to \mathbb{R}$ and $\sigma:\mathbb{R}\times[t_0,T]\to \mathbb{R}$ are both Borel measurable functions.\\

We define a class of variable exponent functions, denoted by $\mathcal{S}$. We say that the function $p(\cdot)$ belongs to the class $\mathcal{S}$ if it satisfies the following hypotheses:
\begin{itemize}
   \item [($\mathbf{p_1}$)] $p(\cdot): (0, \infty) \to \mathbb{R}$ is a differentiable function satisfying
   \begin{align*}
     1\leq p^-:=\inf_{x> 0} p(x)\quad \text{and} \quad \sup_{x> 0} p(x):=p^+ <\infty.
   \end{align*}
   \item [($\mathbf{p_2}$)]It holds
   \begin{align*}
    \lim_{x \to \infty}p(x)=1 \quad \text{and} \quad \limsup_{x \to \infty}(p(x)-1)\log(x)<\infty.
   \end{align*}
   \item [($\mathbf{p_3}$)] There exist real numbers $\delta, M_0, C_0>0$ and $\alpha>0$ with $p^+<1+\alpha$ such that
   \begin{equation*}
   |p^{\prime}(x)|\leq
   \left\{ \begin{array}{ll}
   M_0,\quad & 0<x\leq \delta, \\
   C_0 x^{-1-\alpha},\quad & x > \delta.
   \end{array}\right.
   \end{equation*}
\end{itemize}
\begin{remark}\label{Rem:1} An example of $p(\cdot)$ which satisfies the hypotheses $(\mathbf{p_1})$-$(\mathbf{p_3})$ is $p(x)=1+\frac{a}{(1+x)^2}$, $1 \leq a\leq p^+-1$. Indeed, $(\mathbf{p_1})$ and $(\mathbf{p_2})$ easily follows. As for $(\mathbf{p_3})$, if one lets $\delta \leq 1$, $M_0=\frac{2a}{\delta}$, $C_0= 2a$ and $\alpha=2$ then
  \begin{equation*}
   |p^{\prime}(x)|\leq
   \left\{\begin{array}{ll}
   \frac{2a}{\delta},\quad & 0<x\leq 1, \\
   2a x^{-3},\quad & x > 1.
   \end{array}\right.
   \end{equation*}
\end{remark}

\begin{lemma}\label{Lem:2.1}
Assume that $ x_0 > 0$, and $(\mathbf{p_1})$, $(\mathbf{p_3})$ hold. Then the process $X(t)$ is strictly positive for $t \in [0,T]$.
\end{lemma}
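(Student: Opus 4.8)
The plan is to pass to $Y(t):=\log X(t)$ and run a localization (Feller/Lyapunov-type) argument. The only way positivity can fail is for $Y(t)$ to diverge to $-\infty$ before time $T$; I would rule this out by showing that the It\^o drift acquired by $\log X$ stays bounded near the boundary $x=0$, which is exactly what the lower bound $p^-\ge 1$ in $(\mathbf{p_1})$ provides. Throughout, $X$ denotes a continuous solution of \eqref{eq.1} on $[0,T]$ with values in $(0,\infty)$ (on the state space $(0,\infty)$ the coefficients $x\mapsto\mu x$ and $x\mapsto\sigma x^{p(x)}$ are locally Lipschitz thanks to $(\mathbf{p_1})$ and $(\mathbf{p_3})$, so this is the natural object).

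\emph{Step 1 (localization).} For integers $n,m$ with $1/n<x_0<m$ I would set $\sigma_n:=\inf\{t\ge0:X(t)\le 1/n\}$, $\rho_m:=\inf\{t\ge0:X(t)\ge m\}$ and $\theta_{n,m}:=\sigma_n\wedge\rho_m\wedge T$. By continuity of the paths and $X(0)=x_0\in(1/n,m)$, the process stays in $[1/n,m]$ on $[0,\theta_{n,m}]$, so there the coefficients and $x\mapsto\log x$ are smooth and bounded and It\^o's formula applies verbatim. It gives, for every $t\ge0$,
\begin{multline*}
\log X(t\wedge\theta_{n,m})=\log x_0+\int_0^{t\wedge\theta_{n,m}}\Big(\mu-\tfrac{\sigma^2}{2}X(s)^{2p(X(s))-2}\Big)ds\\
+\int_0^{t\wedge\theta_{n,m}}\sigma X(s)^{p(X(s))-1}\,dW(s),
\end{multline*}
and since the integrand of the It\^o term is bounded on $[0,\theta_{n,m}]$, that term is a mean-zero martingale; taking expectations removes it.

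\emph{Step 2 (the decisive estimate).} Because $p^-\ge1$, the exponent $2p(x)-2$ is nonnegative on $(0,\infty)$, hence $x^{2p(x)-2}\le1$ for $0<x\le1$ and $x^{2p(x)-2}\le x^{2(p^+-1)}$ for $x\ge1$; consequently $0\le x^{2p(x)-2}\le K_m:=1+m^{2(p^+-1)}<\infty$ for all $x\in(0,m]$ (finite since $p^+<\infty$). Dropping the nonnegative $\mu$-term and taking expectations above therefore yields
\[
\mathbb{E}\big[\log X(\theta_{n,m})\big]\ \ge\ \log x_0-\tfrac{\sigma^2}{2}K_m T\ =:\ c_m ,
\]
a bound \emph{independent of $n$}. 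Splitting according to $A_{n,m}:=\{\sigma_n\le\rho_m\wedge T\}$ — on which $\theta_{n,m}=\sigma_n$ and $\log X(\theta_{n,m})=-\log n$, while off $A_{n,m}$ one has $X(\theta_{n,m})\le m$ — gives $c_m\le-(\log n)\,\mathbb{P}(A_{n,m})+\log m$, i.e. $\mathbb{P}(A_{n,m})\le(\log m-c_m)/\log n\to0$ as $n\to\infty$ ($m$ fixed). Since $\{\sigma_\infty\le T\}\cap\{\rho_m>T\}\subseteq A_{n,m}$ for every $n$, where $\sigma_\infty:=\lim_n\sigma_n$, this forces $\mathbb{P}(\sigma_\infty\le T,\ \rho_m>T)=0$; letting $m\to\infty$ and using that a continuous process is a.s.\ bounded on $[0,T]$ (so $\bigcup_m\{\rho_m>T\}$ has full measure) gives $\mathbb{P}(\sigma_\infty\le T)=0$. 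As $X(t)>1/n$ on $[0,\sigma_n)$ and a.s.\ $T<\sigma_\infty$, it follows that $X(t)>0$ for every $t\in[0,T]$, a.s.; if $x_0$ is genuinely random one runs the same computation conditionally on $\mathcal{F}_0$.

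\emph{Main obstacle.} The crux — and the single place where the structure of $p$ is used — is the drift bound in Step 2: one must prevent $\mu-\tfrac{\sigma^2}{2}x^{2p(x)-2}$ from diverging to $-\infty$ as $x\downarrow0$, and $p^-\ge1$ is exactly what guarantees $2p(x)-2\ge0$ near the origin. Were $p$ allowed to drop below $1$ near $0$ — the CEV regime $0\le\gamma<1$ that the present model deliberately excludes — then $x^{2p(x)-2}\to\infty$, the boundary $0$ would become attainable, and the lemma would be false. The remainder (the localization and its passage to the limit, the upper truncation, the martingale/It\^o bookkeeping) presents no real difficulty provided the constant $K_m$ is kept free of $n$.
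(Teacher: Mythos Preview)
Your argument is correct and takes a genuinely different route from the paper. The paper invokes a differential form of Feller's boundary test: it verifies that $\mathcal T_p(x):=\mu x-\tfrac{\sigma^2}{2}\partial_x\bigl(x^{2p(x)}\bigr)$ has nonnegative $\liminf$ as $x\downarrow 0$, computing the derivative explicitly and using the bound $|p'(x)|\le M_0$ from $(\mathbf{p_3})$ together with $p^-\ge1$ to force each term to vanish. Your approach instead runs a direct Lyapunov/localization argument with $V(x)=\log x$: It\^o's formula produces the drift $\mu-\tfrac{\sigma^2}{2}x^{2p(x)-2}$, and the single observation $p^-\ge1\Rightarrow x^{2p(x)-2}\le1$ for $x\in(0,1]$ gives a lower bound uniform in $n$, from which the standard stopping-time splitting finishes the job.

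What each approach buys: the paper's proof is shorter on the page but leans on Feller's criterion as a black box and genuinely uses the $|p'|$ bound in $(\mathbf{p_3})$ inside the limit computation. Your proof is self-contained, isolates more sharply that the only structural input for non-attainability of $0$ is $p^-\ge1$ (you use $(\mathbf{p_3})$ only to ensure local Lipschitz coefficients so that a solution exists up to the exit time), and makes explicit why the excluded CEV regime $\gamma<1$ would fail. One small caveat worth flagging in a final write-up: the step ``$\bigcup_m\{\rho_m>T\}$ has full measure'' is non-explosion to $+\infty$, which you are tacitly importing from the assumption that a continuous solution on $[0,T]$ already exists; the paper's proof makes the same implicit assumption, so this is not a defect relative to the original, but it is worth saying so plainly.
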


\begin{proof}
We use the following Feller’s test \cite{feller52,feller51}, which is also know as the local (differential) form of Feller’s non-attainability test, to analyze the behavior of the diffusion processes $X(t)$ at the boundary (i.e. $X(t)=0$) of its state space $(0,\infty)$.\\
Consider equation \eqref{eq.2}. If the condition
\begin{equation}\label{eq.4}
\lim_{x \to 0}\left(\mu(x)-\frac{1}{2}\frac{\partial  \sigma^2(x)}{\partial x} \right)\geq 0
\end{equation}
holds, then the boundary $X(t)=0$ is non-attainable for the process $X(t)$ if  $X(0)>0$.\\
Now, we apply \eqref{eq.4} to the diffusion process \eqref{eq.1}.\\
Notice that since $x^{p(x)}=e^{p(x)\log(x)}$, as a composition of differentiable functions, $x^{p(x)}$ is differentiable on $(0,\infty)$ with the derivative
\begin{align}\label{eq.5a}
\frac{d x^{p(x)}}{dx}=p(x)x^{p(x)-1}+p'(x)x^{p(x)}\log(x).
\end{align}
Now define
\begin{equation}\label{eq.5}
\mathcal{T}_p(x)=\mu x-\frac{\sigma^2}{2}\frac{\partial x^{2p(x)}}{\partial x}.
\end{equation}
Then
\begin{align}\label{eq.6}
\mathcal{T}_{p}(x)\geq \mu x-\sigma^2 \left(M_0x^{2p(x)}|\log(x)|+x^{2p(x)-1}p^+\right),
\end{align}
for $0<x\leq\delta<1$. Now applying the elementary limit theorems and the assumptions we have, it reads
\begin{align}\label{eq.7}
\lim_{x \to 0^+}\mathcal{T}_{p}(x)& \geq \lim_{x \to 0^+} \left(\mu x-\sigma^2 \left(M_0x^{2p(x)}|\log(x)|+x^{2p(x)-1}p^+\right)\right)= 0,
\end{align}
which is the desired. In conclusion, the diffusion process $X(t)$ of \eqref{eq.1} started in $(0,\infty)$ never hits zero almost surely (a.s.), and hence, \eqref{eq.1} is well-defined.
\end{proof}

\section{Existence and uniqueness results} \label{Sec3}

\begin{lemma}\label{Lem:2.2} Assume that $p(\cdot) \in \mathcal{S}$. Then the following statements hold true.
\begin{itemize}
  \item [$(i)$] There exists a constant $L>0$ such that
  \begin{equation}\label{eq.7a}
     |x^{p(x)}-y^{p(y)}| \leq L|x-y|, \quad \forall x,y \in (0,\infty).
  \end{equation}
  \item [$(ii)$] There exists a constant $K>0$ such that
  \begin{equation}\label{eq.7b}
   x^{p(x)}\leq K(1+x), \quad \forall x\in (0,\infty).
\end{equation}
\end{itemize}
\end{lemma}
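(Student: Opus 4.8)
The plan is to establish both the linear growth bound $(ii)$ first, since it is the easier of the two and its ingredients (splitting the state space at the threshold $\delta$, controlling $p(x)-1$ via $(\mathbf{p_2})$) will be reused in $(i)$. For $(ii)$, write $x^{p(x)} = x\cdot x^{p(x)-1}$. On the compact interval $(0,\delta]$ we have $1\le p(x)\le p^+$, so $x^{p(x)}\le \max(\delta^{p^+},\delta)\le\delta^{p^+}+1$ is simply bounded. On $[\delta,\infty)$ I would split further: on $[\delta, R]$ (for a suitable fixed $R$) continuity gives a bound, while for large $x$ the second part of $(\mathbf{p_2})$, $\limsup_{x\to\infty}(p(x)-1)\log x < \infty$, says $x^{p(x)-1} = e^{(p(x)-1)\log x}$ stays bounded by some constant. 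Combining these, $x^{p(x)-1}\le C$ uniformly on $(0,\infty)$ for some $C>0$, hence $x^{p(x)}\le C x \le K(1+x)$ after adjusting constants. (The bound near $0$ is absorbed into the constant term.)

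For the Lipschitz estimate $(i)$, the natural route is to show $g(x):=x^{p(x)}$ has a bounded derivative on $(0,\infty)$; since $g$ is $C^1$ there by \eqref{eq.5a}, the mean value theorem then gives \eqref{eq.7a} with $L=\sup_{x>0}|g'(x)|$. From \eqref{eq.5a},
\[
g'(x) = p(x)x^{p(x)-1} + p'(x)x^{p(x)}\log x.
\]
The first term is bounded: $|p(x)x^{p(x)-1}|\le p^+\, x^{p(x)-1}\le p^+ C$ by the bound on $x^{p(x)-1}$ just obtained. For the second term, $|p'(x)x^{p(x)}\log x|$, I split at $\delta$. On $(0,\delta]$: $|p'(x)|\le M_0$, $x^{p(x)}\le \delta^{p^+}+\delta$ is bounded, and $x|\log x|\to 0$ as $x\to0^+$ so $x^{p(x)}|\log x| \le x^{p^-}|\log x|\le x|\log x|$ (using $p^-\ge1$, $x<1$) is bounded on $(0,\delta]$. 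On $(\delta,\infty)$: use $(\mathbf{p_3})$, $|p'(x)|\le C_0 x^{-1-\alpha}$, together with $x^{p(x)}\le Cx$ from $(ii)$, to get
\[
|p'(x)\, x^{p(x)}\log x| \le C_0 x^{-1-\alpha}\cdot Cx\cdot \log x = C C_0\, \frac{\log x}{x^{\alpha}},
\]
which is bounded on $(\delta,\infty)$ since $\alpha>0$. Thus $|g'(x)|$ is bounded on all of $(0,\infty)$, giving $(i)$.

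The main obstacle — and the place I would be most careful — is the behavior of the term $p'(x)x^{p(x)}\log x$ at the two ends of the interval: near $x=0$ one needs the logarithmic singularity to be killed by the power $x^{p(x)}$ (which works precisely because $p^-\ge1$ forces $x^{p(x)}\lesssim x$ for small $x$), and near $x=\infty$ one needs the decay $x^{-1-\alpha}$ from $(\mathbf{p_3})$ to beat the linear growth of $x^{p(x)}$ and the $\log x$ factor, which it does with room to spare. The role of the condition $p^+<1+\alpha$ in $(\mathbf{p_3})$ is a minor subtlety: it guarantees consistency (so that the class $\mathcal{S}$ is nonempty and the growth exponent never exceeds $1+\alpha$), but the estimates above only use $x^{p(x)}\le Cx$ for large $x$, which already follows from $(\mathbf{p_2})$. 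A clean way to organize the whole argument is to prove once and for all the uniform bound $\sup_{x>0} x^{p(x)-1} =: C < \infty$, state it as the key lemma-within-the-proof, and then derive both $(i)$ and $(ii)$ as short consequences.
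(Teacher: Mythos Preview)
Your proposal is correct and follows the same high-level strategy as the paper: show that $g(x)=x^{p(x)}$ has a uniformly bounded derivative on $(0,\infty)$ and invoke the mean value theorem for $(i)$, and control $x^{p(x)-1}$ via $(\mathbf{p_2})$ for $(ii)$. The organization differs, however, and your version is tighter. The paper proves $(i)$ first and $(ii)$ second, treating the bound on $x^{p(x)-1}$ separately in each part (with a case split on whether $p(0^+)>1$ or $p(0^+)=1$ near the origin); you instead isolate the uniform bound $\sup_{x>0}x^{p(x)-1}<\infty$ as a single lemma and derive both conclusions from it, which avoids repetition. More substantively, for the term $|p'(x)|\,x^{p(x)}\log x$ at infinity the paper uses the crude estimate $x^{p(x)}\le x^{p^+}$, obtaining $C_0\,x^{p^+-1-\alpha}\log x$, and therefore \emph{needs} the hypothesis $p^+<1+\alpha$ from $(\mathbf{p_3})$ to make this vanish. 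Your route uses $x^{p(x)}\le Cx$ (already available from the key lemma) and gets $CC_0\,x^{-\alpha}\log x$, which is bounded for any $\alpha>0$; so you correctly observe that $p^+<1+\alpha$ is not actually required for the Lipschitz estimate. That is a genuine simplification over the paper's argument.
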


\begin{proof} $(i)$ Let $\varphi(x):=x^{p(x)}$. There are two cases to analyse.\\
\textbf{Case} $0<x\leq\delta\leq1$: First note that $x^{p(x)}\left|\log(x)\right| \to 0$ as $x \to 0^+$. Next, since $p(\cdot)$ is differentiable on  $(0, \infty)$, we have
$\lim_{x \to 0^+ } p(x)=p(0^+)\geq 1$.\\
\textit{If} $p(0^+) > 1$; since $(p(x)-1)\log(x) \to -\infty$ as $x \to 0^+$, we obtain that \newline $\lim_{x \to 0^+ }x^{p(x)-1} =0$.\\
\textit{If} $p(0^+) = 1$; applying the Mean Value Theorem (MVT) on the closed interval $[0^+,x] \subset (0,\delta)$ for an inner point $\xi \in (0^+,x)$ gives
\begin{equation}\label{eq.8}
\frac{p(x)-p(0^+)}{x-0^+}=p'(\xi) \Rightarrow p(x)-1 \leq M_0 x \Rightarrow (p(x)-1)\log(x) \leq M_0 x  \left|\log(x)\right|,
\end{equation}
which yields $\lim_{x \to 0^+ }x^{p(x)-1} =1$. Therefore, we have
\begin{align}\label{eq.9}
\left|\varphi'(x)\right|\leq p^+ x^{p(x)-1}+\left|p'(x)\right| x^{p(x)}\left|\log(x)\right|\leq L_0,
\end{align}
where $L_0:=\delta^{p^-}\left(p^+ \delta^{-1}+M_0 \left|\log(\delta)\right|\right)$; that is, $\varphi'(x)$ remains uniformly bounded as $x \to 0^+$.\\
\textbf{Case} $1<x<\infty$: From $(\mathbf{p_2})$, there exist real numbers $M_{\infty}, R_{\infty}>0$ such that
\begin{equation}\label{eq.9a}
 p(x)-1 \leq \frac{M_{\infty}}{\log(x)}, \quad \forall x >R_{\infty}.
\end{equation}
Therefore, $\lim_{x \to \infty}x^{p(x)-1} \leq e^{M_{\infty}}$. Additionally, since
\begin{equation}\label{eq.9b}
0\leq\left|p'(x)\right| x^{p(x)}\log(x) \leq C_0 x^{-1-\alpha+p^+}\log(x),
\end{equation}
it reads $\lim_{x \to \infty}\left|p'(x)\right| x^{p(x)}\log(x)=0$. Thus, there exists $0\leq M_1<1$ such that
\begin{equation}\label{eq.9c}
\sup_{x \in (1,\infty)}\left|p'(x)\right| x^{p(x)}\log(x)\leq M_1 \quad  \text{whenever} \quad x >R_{\infty}.
\end{equation}
In conclusion, we have
\begin{equation}\label{eq.10}
\left|\varphi'(x)\right|\leq p^+ x^{p(x)-1}+\left|p'(x)\right| x^{p(x)}\log(x)\leq L_1,
\end{equation}
where $L_1:=p^+ e^{M_{\infty}}+M_1$, which means $\varphi'(x)$ stays uniformly bounded as $x \to \infty$. Therefore, $\varphi'(x)$ is uniformly bounded for any $x \in (0,\infty)$. Lastly, applying the MVT once more concludes that
\begin{equation}\label{eq.11}
\left|\varphi(x)-\varphi(y)\right|\leq L \left|x-y\right|, \quad \text{where} \quad L:=\max\{L_0,L_1\}.
\end{equation}
$(ii)$ Note that if $0<x\leq\delta\leq 1$, then
\begin{equation}\label{eq.12}
x^{p(x)}\leq \delta^{p^-}\leq \delta^{p^-}(1+x).
\end{equation}
For the case $1<x<\infty$: using the same reasoning as with obtaining \eqref{eq.9a}, it reads
\begin{equation}\label{eq.13}
e^{(p(x)-1)\log (x)}\leq e^{M_{\infty}}
\end{equation}
which yields
\begin{equation}\label{eq.15}
x^{p(x)}\leq e^{M_{\infty}}(1+x).
\end{equation}
Therefore, for any $x \in (0,\infty)$
\begin{equation}\label{eq.16}
x^{p(x)}\leq K(1+x), \quad \text{where} \quad K:= \max\{\delta^{p^-}, e^{M_{\infty}}\}.
\end{equation}
\end{proof}

\begin{theorem}\label{Thrm:3.1}
Assume that $p(\cdot) \in \mathcal{S}$. Assume also that the initial value $X(0)=x_0$ has a finite second moment: $\mathbb{E}[x_0^2]<\infty$, and is independent of $\{W(t),\, t\geq 0\}$. Then there exists a unique strong solution $X(t)$ of \eqref{eq.1} in $\mathcal{M}^{2}[0,T]$ with continuous
paths. Further,
\begin{equation}\label{eq.18}
\mathbb{E}\left[\sup_{t \leq T}|X(t)|^{2}\right] \leq (1+3\mathbb{E}[x_0^2])e^{3\mu^2T^2+24\sigma^2TK^2}.
\end{equation}
\end{theorem}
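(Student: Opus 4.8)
The plan is to run the classical It\^{o}--Picard iteration, with Lemma~\ref{Lem:2.2} furnishing the global Lipschitz and linear-growth estimates that usually figure as hypotheses on the coefficients, and Lemma~\ref{Lem:2.1} keeping the solution inside the state space $(0,\infty)$. One preliminary point must be handled: the diffusion coefficient $\varphi(x):=x^{p(x)}$ is defined only for $x>0$, whereas Picard iterates need not stay positive. Since $\varphi$ is Lipschitz on $(0,\infty)$ by Lemma~\ref{Lem:2.2}$(i)$ and $\lim_{x\to 0^+}\varphi(x)=0$ (by the estimates in the proof of Lemma~\ref{Lem:2.2}), I would extend $\varphi$ to a globally Lipschitz function $\tilde\varphi$ on $\mathbb{R}$ via $\tilde\varphi(x):=\varphi(x)$ for $x>0$ and $\tilde\varphi(x):=0$ for $x\leq 0$, which preserves the constants $L$ and $K$. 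The drift $x\mapsto\mu x$ is already globally Lipschitz with linear growth, so it suffices to solve the SDE with coefficients $(\mu x,\sigma\tilde\varphi(x))$ on $\mathbb{R}$; by Lemma~\ref{Lem:2.1} this solution never reaches $0$ when $x_0>0$, hence it solves \eqref{eq.1}. (Alternatively one may work directly on $(0,\infty)$: Lemma~\ref{Lem:2.2} gives local existence and uniqueness up to the exit time of $(0,\infty)$, the linear growth excludes explosion, and Lemma~\ref{Lem:2.1} excludes reaching the boundary.)

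For existence I would iterate $X^{(0)}(t)\equiv x_0$ and
\[
X^{(n+1)}(t)=x_0+\int_0^t\mu X^{(n)}(s)\,ds+\int_0^t\sigma\tilde\varphi(X^{(n)}(s))\,dW(s).
\]
An induction using the linear-growth bound, Doob's $L^2$ maximal inequality, the It\^{o} isometry and Gronwall's lemma shows $\sup_n\mathbb{E}[\sup_{t\leq T}|X^{(n)}(t)|^2]<\infty$, so each iterate belongs to $\mathcal{M}^2[0,T]$ and every stochastic integral is well defined. Putting $\Delta_n(t):=\mathbb{E}[\sup_{u\leq t}|X^{(n+1)}(u)-X^{(n)}(u)|^2]$ and combining Cauchy--Schwarz on the drift term with Doob's inequality and the It\^{o} isometry on the martingale term and the Lipschitz bound $|\tilde\varphi(x)-\tilde\varphi(y)|\leq L|x-y|$, one obtains $\Delta_n(t)\leq C\int_0^t\Delta_{n-1}(s)\,ds$ with $C=C(\mu,\sigma,L,T)$; iterating gives $\Delta_n(T)\leq (CT)^n\Delta_0(T)/n!$, which is summable. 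Hence $(X^{(n)})$ is Cauchy in $L^2(\Omega;C([0,T]))$, a subsequence converges a.s.\ uniformly on $[0,T]$ to a continuous adapted process $X$, and passing to the limit in the recursion --- moving the limit through the stochastic integral by the It\^{o} isometry --- shows $X$ solves \eqref{eq.1} in integrated form; the bound \eqref{eq.18} below (or Remark~\ref{Rem:2}) then puts $X\in\mathcal{M}^2[0,T]$.

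For uniqueness, if $X,Y$ are two solutions with continuous paths in $\mathcal{M}^2[0,T]$, the same estimates applied to $e(t):=\mathbb{E}[\sup_{u\leq t}|X(u)-Y(u)|^2]$ (localized by $\tau_n:=\inf\{t:|X(t)|\vee|Y(t)|\geq n\}$ to secure finiteness before passing to the limit) give $e(t)\leq C\int_0^t e(s)\,ds$ with $e(0)=0$, so Gronwall's lemma forces $e\equiv 0$, i.e.\ $X$ and $Y$ are indistinguishable. For \eqref{eq.18}, start from the integrated form of \eqref{eq.1}, apply $(a+b+c)^2\leq 3(a^2+b^2+c^2)$, take $\sup_{u\leq t}$ and expectations, and set $\psi(t):=\mathbb{E}[\sup_{u\leq t}|X(u)|^2]$: Cauchy--Schwarz bounds the drift contribution by $3\mu^2T\int_0^t\psi(s)\,ds$, while Doob's $L^2$ inequality (factor $4$), the It\^{o} isometry and $|x^{p(x)}|^2\leq K^2(1+|x|)^2\leq 2K^2(1+|x|^2)$ bound the martingale contribution by $24\sigma^2K^2\int_0^t(1+\psi(s))\,ds$. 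Adding $1$ to both sides and absorbing $24\sigma^2K^2 t$ into $(3\mu^2T+24\sigma^2K^2)\int_0^t 1\,ds$, the function $\Phi(t):=1+\psi(t)$ satisfies $\Phi(t)\leq(1+3\mathbb{E}[x_0^2])+(3\mu^2T+24\sigma^2K^2)\int_0^t\Phi(s)\,ds$, and Gronwall's lemma at $t=T$ yields \eqref{eq.18}.

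I do not expect a deep obstacle once Lemmas~\ref{Lem:2.1} and~\ref{Lem:2.2} are in hand: the real care is the bookkeeping around the restricted domain $(0,\infty)$ (the Lipschitz extension of $x^{p(x)}$, or equivalently the exit-time argument) and the need to know $\mathbb{E}[\sup_{t\leq T}|X(t)|^2]$ is finite \emph{before} Gronwall is invoked for \eqref{eq.18} --- which is why a stopping-time truncation is inserted there (finiteness then also follows from the iteration bound via Fatou). I expect the constant tracking in that last step, rather than any conceptual difficulty, to be the most error-prone part.
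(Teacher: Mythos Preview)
Your proof is correct and follows essentially the same route as the paper: invoke the Lipschitz and linear-growth bounds from Lemma~\ref{Lem:2.2} to obtain existence and uniqueness, then combine $(a+b+c)^2\leq 3(a^2+b^2+c^2)$, H\"older/Cauchy--Schwarz on the drift, the BDG/Doob factor~$4$ on the stochastic integral, the bound $x^{2p(x)}\leq 2K^2(1+x^2)$, and Gronwall to reach \eqref{eq.18} with exactly the same constants. The paper simply cites standard references for existence and uniqueness rather than writing out the Picard iteration, and it does not explicitly address the domain-extension issue or the finiteness needed before Gronwall that you flag---your treatment is in fact more careful on those points.
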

\begin{proof} Since the drift, $\mu x$, and the diffusion, $\sigma x^{p(x)}$, terms of \eqref{eq.1} satisfy the Lipschitz condition \eqref{eq.7a} and the linear growth condition \eqref{eq.7b}, there exists a unique solution $X(t)$ of \eqref{eq.1} in $\mathcal{M}^{2}[0,T]$ (see, e.g., \cite{klebaner12,oksendal03}). Next, we proceed to verify the inequality \eqref{eq.18}. To do so, we will adopt the approach used in Lemma 3.2 of \cite{mao07}. For every integer $n\geq 1$, define the stopping time
\begin{equation}\label{eq.18ca}
\tau_n=T \wedge \inf\{ t \in [0,T]: |X(t)|\geq n\}.
\end{equation}
Then, $\tau_n \uparrow T$ a.s. Set $X_n(t)=X(t \wedge \tau_n)$ for $t \in [0,T]$. Applying the H\"{o}lder inequality provides
\begin{align} \label{eq.18a}
|X_n(t)|^{2}& \leq 3x_0^2+ 3\mu^2\bigg|\int_0^t X_n(s)ds\bigg|^2 +3\bigg|\int_0^t X_n(s)^{p(X_n(s))}dW(s)\bigg|^2\nonumber \\
&\leq 3x_0^2 + 3\mu^2T \int_0^t (1+|X_n(s)|^2)ds + 3\bigg|\int_0^t X_n(s)^{p(X_n(s))}dW(s)\bigg|^2.\nonumber \\
\end{align}
Notice that from \eqref{eq.12}, \eqref{eq.15}, and the fact that $X(t) \in \mathcal{M}^{2}[0,T]$,  it follows that
\begin{equation}\label{eq.18bb}
\mathbb{E}\left[\int_{0}^{t}|X(s)|^{2p(X(s))}ds\right]\leq 2(t+1)\left(\delta^{2p^-}+e^{2M_{\infty}}\right)\mathbb{E}\left[\int_{0}^{t}X(s)^2ds \right]<\infty.
\end{equation}
Thus, $X(t)^{p(X(t))} \in \mathcal{M}^{2}[0,T]$.\\
Next, taking the expectation, applying the Burkholder-Davis-Gundy (BDG) inequality, the result \eqref{eq.18bb}, and the linear growth condition provides
\begin{align} \label{eq.19}
\mathbb{E}\left[\sup_{s \leq t}|X_n(s)|^{2}\right]& \leq 3\mathbb{E}[x_0^2] + 3\mu^2T \int_0^t (1+\mathbb{E} |X_n(s)|^2)ds\nonumber \\
& +3\sigma^2\mathbb{E}\left[\sup_{0 \leq s \leq t}\bigg|\int_0^s X_n(r)^{p(X(r))}dW(r)\bigg|^2\right]\nonumber \\
&\leq 3\mathbb{E}[x_0^2] + 3\mu^2T \int_0^t (1+\mathbb{E}|X_n(s)|^2)ds + 12\sigma^2 \mathbb{E} \left[\int_0^t |X_n(s)|^{2p(X(s))} ds\right] \nonumber \\
& \leq 3\mathbb{E}[x_0^2] + (3\mu^2T+24\sigma^2K^2) \int_0^t (1+\mathbb{E}|X_n(s)|^2) ds.
\end{align}
Therefore,
\begin{align} \label{eq.20}
1+\mathbb{E}\left[\sup_{s \leq t}|X_n(s)|^2\right]& \leq 1+ 3\mathbb{E}[x_0^2] + (3\mu^2T+24\sigma^2K^2) \int_0^t \left(1+\mathbb{E}\left[\sup_{r\leq s}|X_n(r)|^2\right] \right) ds.
\end{align}
Now, applying the Gronwall inequality yields
\begin{equation}\label{eq.21}
\mathbb{E}\left[\sup_{t \leq T}|X_n(t)|^{2}\right] \leq  (1+3\mathbb{E}[x_0^2])e^{3\mu^2T^2+24\sigma^2TK^2}.
\end{equation}
Thus,
\begin{equation}\label{eq.21ac}
\mathbb{E}\left[\sup_{t \leq \tau_n}|X_n(t)|^{2}\right] \leq  (1+3\mathbb{E}[x_0^2])e^{3\mu^2T^2+24\sigma^2TK^2}.
\end{equation}
Lastly, letting $n \to \infty$ gives \eqref{eq.18}.
\end{proof}
Next, we provide an error bound which can be used to compare our model \eqref{eq.1} and the geometric Brownian motion analytically.
\begin{theorem}\label{Thrm:3.2}
Assume that $X(t)$ is the solution  of \eqref{eq.1}, and $Y(t)$ is the solution  of the  geometric Brownian motion
\begin{align}\label{eq.26}
dY(t) &= \mu Y(t)dt + \sigma Y(t)dW(t),\quad t\in [0,T], \nonumber \\
Y(0) &= x_0.
\end{align}
If $p(\cdot) \in \mathcal{S}$, then it holds
\begin{equation}\label{eq.27}
\mathcal{E}=\mathbb{E}\left[\sup_{t \leq T}|X(t)-Y(t)|\right] \leq \sqrt{12\sigma^2e^{3T^2\mu^2 + 12T\sigma^2}}\, \Lambda \sup_{x >0}\left|p(x)-1\right|,
\end{equation}
where $\Lambda=\left|\log(\lambda)\right|\,(\lambda+\lambda^{p^+})+\left|\log(R)\right|\,(R+R^{p^+})$; $\lambda\in (0,1)$ and $R>1$ are real numbers.
\end{theorem}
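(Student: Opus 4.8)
The plan is to compare the two diffusions through the difference $Z(t):=X(t)-Y(t)$, which starts at $Z(0)=0$ and, subtracting \eqref{eq.26} from \eqref{eq.1}, satisfies
\begin{equation*}
Z(t)=\mu\int_0^t Z(s)\,ds+\sigma\int_0^t\big(X(s)^{p(X(s))}-X(s)\big)\,dW(s)+\sigma\int_0^t Z(s)\,dW(s),
\end{equation*}
where I have split the diffusion mismatch as $X^{p(X)}-Y=\big(X^{p(X)}-X\big)+\big(X-Y\big)$, i.e.\ a \emph{model-discrepancy} term $D(s):=X(s)^{p(X(s))}-X(s)$ plus a term proportional to $Z$ itself. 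Both $X$ and $Y$ have finite second moments by Theorem \ref{Thrm:3.1}, so all the integrals below are legitimate. I would bound $\mathcal{E}^2\le\mathbb{E}\big[\sup_{t\le T}|Z(t)|^2\big]$ by Cauchy--Schwarz and then estimate the right-hand side.

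For the second moment: square, use $|a+b+c|^2\le 3(|a|^2+|b|^2+|c|^2)$, take $\sup_{t\le u}$ and expectations; estimate the drift integral by H\"{o}lder ($|\int_0^t Z\,ds|^2\le T\int_0^t|Z|^2\,ds$) and the two stochastic integrals by Doob's $L^2$ maximal (equivalently, BDG) inequality. Writing $\phi(u):=\mathbb{E}\big[\sup_{s\le u}|Z(s)|^2\big]$ and bounding $\mathbb{E}|Z(s)|^2\le\phi(s)$, this gives the linear integral inequality
\begin{equation*}
\phi(u)\le 12\sigma^2\int_0^u\mathbb{E}|D(s)|^2\,ds+\big(3\mu^2T+12\sigma^2\big)\int_0^u\phi(s)\,ds,
\end{equation*}
to which Gronwall's inequality applies, producing the exponential factor $e^{3T^2\mu^2+12T\sigma^2}$ of \eqref{eq.27}; once $|D(s)|$ is controlled by a deterministic constant the remaining prefactor is explicit in $\sigma^2$ and $T$.

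The substantive step — and the one I expect to be the main obstacle — is the pointwise estimate $|x^{p(x)}-x|\le\Lambda\,\sup_{x>0}|p(x)-1|$ for all $x\in(0,\infty)$. Applying the Mean Value Theorem to $t\mapsto x^{t}$ on the interval with endpoints $1$ and $p(x)$ gives $x^{p(x)}-x=x^{\theta}\log(x)\,(p(x)-1)$ for some $\theta$ between $1$ and $p(x)$, so $\theta\in[1,p^+]$ by $(\mathbf{p_1})$ and hence $x^{\theta}\le x+x^{p^+}$; therefore $|x^{p(x)}-x|\le(x+x^{p^+})|\log x|\,\sup_{x>0}|p(x)-1|$. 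It then remains to bound $G(x):=(x+x^{p^+})|\log x|$ on $(0,\infty)$. I would split the range into $(0,\lambda]$, $[\lambda,R]$ and $[R,\infty)$, with $\lambda\in(0,1)$ small enough that $G$ is nondecreasing there — whence $G(x)\le G(\lambda)=(\lambda+\lambda^{p^+})|\log\lambda|$ — and $R>1$; on $[\lambda,R]$ one has $|\log x|\le|\log R|$ and $x+x^{p^+}\le R+R^{p^+}$, so $G(x)\le(R+R^{p^+})|\log R|$, and the two contributions sum to $\Lambda$. On the tail $[R,\infty)$ the crude factor $x^{p^+}\log x$ is unbounded, so there one must replace $\sup|p-1|$ by the decay $|p(x)-1|\lesssim x^{-\alpha}$ obtained by integrating $(\mathbf{p_3})$ (together with $(\mathbf{p_2})$); this decay is precisely the mechanism that keeps $x^{\theta}\log(x)\,|p(x)-1|$ under control for large $x$, and reconciling the growth exponent $p^+<1+\alpha$ with the logarithmic factor is the delicate part of this step.

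Finally, combining $\sup_{s\le T}|D(s)|\le\Lambda\,\sup_{x>0}|p(x)-1|$ with the Gronwall output of the second step gives $\mathbb{E}\big[\sup_{t\le T}|Z(t)|^2\big]\le 12\sigma^2 e^{3T^2\mu^2+12T\sigma^2}\big(\Lambda\,\sup_{x>0}|p(x)-1|\big)^2$, and taking square roots via $\mathcal{E}=\mathbb{E}\big[\sup_{t\le T}|Z(t)|\big]\le\big(\mathbb{E}\big[\sup_{t\le T}|Z(t)|^2\big]\big)^{1/2}$ yields \eqref{eq.27}.
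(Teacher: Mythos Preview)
Your proposal follows the paper's proof almost exactly: the same splitting $X^{p(X)}-Y=(X^{p(X)}-X)+(X-Y)$, the same $3(|a|^2+|b|^2+|c|^2)$/H\"older/BDG/Gronwall chain producing the factor $12\sigma^2e^{3T^2\mu^2+12T\sigma^2}$, the same MVT-based pointwise bound $|x^{p(x)}-x|\le(x+x^{p^+})|\log x|\,|p(x)-1|$ (the paper writes $x^{p(x)}-x=x(e^{u}-1)$ with $u=(p(x)-1)\log x$ and applies the MVT to $u\mapsto e^u$, which is equivalent to your application to $t\mapsto x^t$), and the same final Cauchy--Schwarz step. The one place you diverge is the tail $x>R$: you propose to absorb it analytically via the decay $|p(x)-1|\lesssim x^{-\alpha}$ coming from $(\mathbf{p_3})$, whereas the paper handles it probabilistically by introducing the stopping time $\tau=\inf\{t\ge0:\sup X>R\text{ or }\sup Y>R\}$ and invoking Markov's inequality together with Borel--Cantelli (using the second-moment bounds on $X$ and $Y$) to conclude $\tau\to\infty$ as $R\to\infty$, i.e.\ paths almost surely remain in $(0,R]$. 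Your route, if carried through, would yield a bound on $G(x)|p(x)-1|$ valid uniformly over $(0,\infty)$ without any localization; the paper's route sidesteps the unbounded growth of $x^{p^+}\log x$ by restricting attention to bounded paths.
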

\begin{proof}
Using \eqref{eq.1} and \eqref{eq.26}
\begin{align} \label{eq.28}
d\Delta_t &= \mu \Delta_t dt + \sigma \left(X(t)^{p(X(t))}-Y(t)\right)dW(t)\nonumber \\
&=\mu \Delta_t dt + \sigma \Delta_t dW(t) + \sigma \left(X(t)^{p(X(t))}-X(t)\right)dW(t)
\end{align}
where $\Delta_t:=(X(t)-Y(t))$,\,  $t\in [0,T]$.\\ Then
\begin{align} \label{eq.29}
\mathbb{E}\left[\sup_{s \leq t}|\Delta_s|^2\right] & \leq 3\mu^2 \mathbb{E}\left[\sup_{s \leq t} \left|\int_{0}^{s} \Delta_r dr\right|^{2}\right]+ 3\sigma^2 \mathbb{E}\left[\sup_{s \leq t} \left|\int_{0}^{s} \Delta_r dW(r)\right|^{2}\right] \nonumber \\
&+3\sigma^2 \mathbb{E}\left[\sup_{s \leq t} \left|\int_{0}^{s} \left(X(r)^{p(X(r))}-X(r)\right)dW(r)\right|^{2}\right].
\end{align}
Applying the BDG and H\"{o}lder inequalities gives
\begin{align} \label{eq.30}
\mathbb{E}\left[\sup_{s \leq t}|\Delta_s|^2\right] & \leq (3t\mu^2 + 12\sigma^2) \int_{0}^{t} \mathbb{E}\left[\sup_{r \leq s}|\Delta_r|^2\right] ds \nonumber \\
&+12\sigma^2 \mathbb{E} \int_{0}^{t} \mathbb{E}\left[\sup_{r \leq s}\left|X(r)^{p(X(r))}-X(r)\right|^{2}\right]ds.
\end{align}
Now we can apply the Gronwall inequality
\begin{align} \label{eq.31}
\mathbb{E}\left[\sup_{s \leq t}|\Delta_s|^2\right] & \leq 12\sigma^2 e^{3T^2\mu^2 + 12T\sigma^2} \, \int_{0}^{t} \mathbb{E}\left[\sup_{r \leq s}\left|X(r)^{p(X(r))}-X(r)\right|^{2}\right]ds.
\end{align}
Note that since $X(t)$ and $Y(t)$ have continuous paths (Kolmogorov’s continuity theorem); that is,  for each fixed $\omega \in \Omega$, $t \longrightarrow X(t,\omega), Y(t,\omega)$ are continuous on the compact interval $[0,T]$, each assumes a finite maximum on $[0,T]$.
Thus, for $\omega \in \Omega$, there are random numbers $M_X(\omega), M_Y(\omega)>0$ such that
\begin{align} \label{eq.32}
\sup_{t\in [0,T]}X(t,\omega)= M_X(\omega) \quad \text{and} \quad \sup_{t\in [0,T]}Y(t,\omega)= M_Y(\omega).
\end{align}
Now, we define two subspaces of the state-space  $\Omega_{\infty}:=(0,\infty)$  of $X(t)$ and $Y(t)$ as follows:
\begin{align} \label{eq.33}
\Omega_{\lambda}=\left\{\omega \in \Omega:\, \sup_{t\in [0,T]}X(t),\, \sup_{t\in [0,T]}Y(t)\in (0,\lambda],\,\, \forall t \leq T,\,\,\lambda\in (0,1) \right\},
\end{align}
\begin{align} \label{eq.34}
\Omega_{R}=\left\{\omega \in \Omega:\, \sup_{t\in [0,T]}X(t),\, \sup_{t\in [0,T]}Y(t)\in [\lambda, R],\,\, \forall t \leq T,\,\,R>1 \right\}.
\end{align}
Next, we will give an approximation for the term
\begin{align} \label{eq.34a}
\left|X(\cdot)^{p(X(\cdot))}-X(\cdot)\right|=\left|x^{p(x)}-x\right|,\,\, x>0.
\end{align}
Set $u(x)=(p(x)-1)\log(x)$. Then
\begin{align} \label{eq.35}
\left|x^{p(x)}-x\right|=x\left|e^{u(x)}-1\right|.
\end{align}
Now, we treat $u$ as just a variable, and observe the behaviour of the scalar function $f(u)=e^u$ as $u$ varies. Then, without loosing the generality, we may assume that $u>0$ and apply the MVT to the restriction of $f$ to the interval $[0,u]$ for a point $u_0 \in (0,u)$ at which
\begin{align} \label{eq.35a}
e^{u}-1=u e^{u_0}.
\end{align}
Since $u_0 \in (0,u)$, there exists $\phi \in (0,1)$ such that $u_0=\phi u$. Now, if we let $u=u(x)$, then $\phi=\phi(x)\in (0,1)$, and hence, $u_0(x)=\phi(x) u(x)$. Thus
\begin{align} \label{eq.36}
e^{u(x)}-1 = u(x)e^{\phi(x) u(x)}.
\end{align}
Therefore, we have
\begin{align} \label{eq.37}
\left|x^{p(x)}-x\right|=\left|p(x)-1\right|\,\left|\log(x)\right|\,x\,x^{\phi(x)(p(x)-1)}\leq \left|p(x)-1\right|\,\left|\log(x)\right|(x+x^{p^+})
\end{align}
Thus
\begin{equation}\label{eq.40}
\left|\log(x)\right|(x+x^{p^+})\leq
\left\{\begin{array}{ll}
\left|\log(\lambda)\right|\,(\lambda+\lambda^{p^+}),\quad & \omega \in \Omega_{\lambda}, \\
\left|\log(R)\right|\,(R+R^{p^+}),\quad & \omega \in \Omega_{R}.
\end{array}\right.
\end{equation}
Putting all these together gives
\begin{align} \label{eq.41}
\sup_{t \leq T}\left|X(t)^{p(X(t))}-X(t)\right|=\sup_{x >0}\left|x^{p(x)}-x\right| \leq \Lambda\sup_{x >0}\left|p(x)-1\right|,
\end{align}
and hence
\begin{align} \label{eq.42}
\mathbb{E}\left[\sup_{t \leq T}\left|X(t)^{p(X(t))}-X(t)\right|^{2}\right] \leq \Lambda^{2}\, \sup_{x >0}\left|p(x)-1\right|^{2}.
\end{align}
Rewriting \eqref{eq.31} gives
\begin{align} \label{eq.43}
\mathbb{E}\left[\sup_{s \leq t}|\Delta_s|^2\right]  & \leq 12\sigma^2\Lambda^{2} e^{3T^2\mu^2 + 12T\sigma^2} \sup_{x >0}\left|p(x)-1\right|^{2}.
\end{align}
Now applying the Cauchy-Schwarz inequality provides the desired result
\begin{align} \label{eq.44}
\mathbb{E}\left[\sup_{s \leq t}|\Delta_s|\right] & \leq \sqrt{12\sigma^2\Lambda^{2} e^{3T^2\mu^2 + 12T\sigma^2}}\, \sup_{x >0}\left|p(x)-1\right|.
\end{align}
Now, define the event
\begin{align} \label{eq.45}
\Omega^c_{\infty}=\Omega_{\infty}\setminus (\Omega_{\lambda}\cup \Omega_{R})=\left\{\omega \in \Omega:\, \sup_{t\in [0,T]}X(t)>R \text{ or } \sup_{t\in [0,T]}Y(t)>R \right\},
\end{align}
and the stopping time
\begin{equation} \label{eq.46}
\tau=\inf\{t\geq0:\, \sup_{t\in [0,T]}X(t)>R \text{ or } \sup_{t\in [0,T]}Y(t)>R\}.
\end{equation}
We claim that $\tau \to \infty$ as $R \to \infty$.
In doing so, using the Markov inequality and \eqref{eq.21} it reads
\begin{align} \label{eq.47}
\mathbb{P}\left\{\omega \in \Omega:\, \sup_{t\in [0,T]}X(t)> R \right\} &\leq \frac{1}{R^2}\,\mathbb{E}\left[\sup_{t\in [0,T]}X(t)^2\right] \nonumber \\
&\leq \frac{1}{R^2}\,(1+3\mathbb{E}[x_0^2])e^{3\mu^2T^2+12\sigma^2TK^2}<\infty.
\end{align}
Employing the Borel-Cantelli lemma leads to
\begin{align} \label{eq.48}
\mathbb{P}\left\{\limsup_{R \to \infty}\left\{\omega \in \Omega:\, \sup_{t\in [0,T]}X(t)> R \right\} \right\}=0.
\end{align}
Since $Y(t)$ is the solution  of the  geometric Brownian motion \eqref{eq.26}, it has a finite second moment for all $t \geq 0$, more precisely
$$
\mathbb{E}\left[\sup_{t\in [0,T]}Y(t)^2\right]=\mathbb{E}\left[Y(t)^2\right]=\mathbb{E}[x_0^2] e^{(2\mu+\sigma^2)t}<\infty.
$$
Therefore, applying the same steps as before gives
\begin{align} \label{eq.49}
\mathbb{P}\left\{\limsup_{R \to \infty}\left\{\omega \in \Omega:\, \sup_{t\in [0,T]}Y(t)> R \right\} \right\}=0.
\end{align}
In conclusion,
\begin{align} \label{eq.50}
\mathbb{P}\left\{\limsup_{R \to \infty}\Omega^c_{\infty} \right\}\leq & \left( \mathbb{P}\left\{\limsup_{R \to \infty}\left\{\omega \in \Omega:\, \sup_{t\in [0,T]}X(t)> R \right\} \right\} \right. \nonumber \\
&\left.+ \mathbb{P}\left\{\limsup_{R \to \infty}\left\{\omega \in \Omega:\, \sup_{t\in [0,T]}Y(t)> R \right\} \right\}\right)=0,
\end{align}
which concludes that $\tau \to \infty$ as $R \to \infty$.
\end{proof}

\section{Error estimates}\label{Sec4}

In this section, we test the accuracy of the upper-bound approximation provided in Theorem \ref{Thrm:3.2} via analytical and numerical error estimates.

\subsection{Analytical error estimates}\label{Sec4.1}
We would like to mention that the main purpose of the analytical error estimates is to provide a rough understanding of the suggested model's convergence-divergence dynamics rather than providing a tight estimate of the error itself. Having said that, the analytical results obtained here, Table \ref{table:1}, are consistent with those obtained in the actual errors observed in the simulations, Table \ref{table:2}, which shows the success of the proposed model.\\

We estimate the localized pathwise errors $\mathcal{E}(\lambda, R)$ on the interval $[\lambda,R]$ using
\begin{equation}\label{eq.51}
\mathcal{E}(\lambda, R)\leq \sqrt{12\sigma^2e^{3T^2\mu^2 + 12T\sigma^2}}\Lambda \sup_{x \in [\lambda, R]}\left|p(x)-1\right|
\end{equation}
with the fixed parameters
\begin{equation}\label{eq.51a}
\mu = 0.05,\quad \sigma = 0.2, \quad T = 1.
\end{equation}
Then the fixed coefficient in \eqref{eq.51} becomes
\begin{equation}\label{eq.54}
\sqrt{12\sigma^2 e^{3T^2\mu^2 + 12T\sigma^2}} = \sqrt{12 \cdot 0.04 \cdot 1.6279} = \sqrt{0.7814} \approx 0.8840.
\end{equation}
We  use the following variable exponent functions
\begin{equation}\label{eq.52}
p_1(x)=1+0.005e^{-0.1x},\qquad
p_2(x)=1+\frac{10^{-3}}{1+x}.
\end{equation}
Obviously $p_1(\cdot), p_2(\cdot) \in \mathcal{S}$. On each interval $[\lambda,R]$, the deviations are monotone in $x$, and hence, the localized suprema becomes
\begin{equation}\label{eq.53}
\sup_{x\in[\lambda,R]}\left|p_1(x)-1\right|=0.005e^{-0.1\lambda}, \quad
\sup_{x\in[\lambda,R]}|p_2(x)-1|=\frac{10^{-3}}{1+\lambda}.
\end{equation}
The Table 1 summarizes the error bounds obtained from \eqref{eq.51} for each $[\lambda,R]$ pair and $p_i(\cdot)$,\, $i=1,2$.
\begin{table}
\begin{center}
\begin{tabular}{ |c|c|c|c|c| }
\hline
Case & $\lambda$ & $R$ & $\mathcal{E}(\lambda, R)$ vs $p_1(x)$ & $\mathcal{E}(\lambda, R)$ vs $p_2(x)$ \\
\hline
1  & 0.1    & 1.1 & 0.002922 & 0.000538  \\
2  & 0.01   & 1.2 & 0.002335 & 0.000463  \\
3  & 0.001  & 1.4 & 0.004228 & 0.000844  \\
4  & 0.0001 & 1.5 & 0.005390 & 0.001077  \\
5  & 0.00001& 1.7 & 0.007986 & 0.001596  \\
6  & 0.0001 & 1.5 & 0.005390 & 0.001077  \\
7  & 0.001  & 1.4 & 0.004228 & 0.000844  \\
8  & 0.01   & 1.3 & 0.003416 & 0.000678  \\
9  & 0.1    & 1.2 & 0.003920 & 0.000721  \\
10 & 0.2    & 1.1 & 0.003687 & 0.000628  \\
\hline
\end{tabular}
\caption{Summary of analytical error estimates}
\label{table:1}
\end{center}
\end{table}
As it can be seen in the results, the error bound is highly sensitive to the choice of the interval $[\lambda, R]$; the error bound increases as $\lambda$ decreases ($\lambda \to 0^+$) and $R$ increases, which is expected since $\Lambda$ includes terms $\left|\log(\lambda)\right|$ and $\left|\log(R)\right|$. This shows that the error estimate formula \eqref{eq.51} is most effective when $p(\cdot)$ stays within a specific range away from zero. Notice also that $\mathcal{E}(\lambda, R) \to 0$ as $p(x) \to 1$,  as expected.

\subsection{Numerical error estimates}\label{Sec4.2}
In this subsection, we test our model \eqref{eq.1} against the GBM model with two alternative variable exponent functions $p_1(\cdot), p_2(\cdot) \in \mathcal{S}$. Each variable exponent function is paired with the GBM model in sample‐path and distributional comparisons. The results, visualized in Figures \ref{fig1:cir_p1}, \ref{fig2:cir_p2} and \ref{fig3:cir_p3}, support the theoretical validity of the model as a meaningful generalization of the GBM model.
\subsubsection{Experiment design}
\noindent We compare the solution of the GBM model
against
\begin{equation*}
\mathrm{\eqref{eq.1}}_i
\quad dX(t) = \mu X(t)dt + \sigma X(t)^{p_i(X(t))}dW(t)),
\quad i=1,2,
\end{equation*}
with the variable exponent functions $p_i(\cdot)$ given in \eqref{eq.52}. \\
The numerical experiments were conducted with the following parameters:
\begin{itemize}
\item $\mu = 0.05$, $\sigma = 0.2$, $T = 1$, $\Delta t = 0.001$, $X(0) = x_0 = 1.0$.
\item Number of time steps: 1000.
\item Number of sample paths: 20000 (10000 main + 10000 antithetic variates).
\end{itemize}
The log-transformed Milstein scheme was used along with antithetic variates. Identical Brownian increments were used across all models to isolate the impact of the variable exponent functions $p_i(\cdot)$. The pathwise strong convergence errors between our model  and the GBM model, namely
$$
\mathcal{E}=\mathbb{E}\left[\sup_{t\in[0,T]}|X(t)_{\mathcal{P}}-X(t)_{\text{GBM}}|\right]
$$
were computed for each variable exponent function. In Figures \ref{fig1:cir_p1} and \ref{fig2:cir_p2}, the left panel shows single sample path trajectories using identical Brownian increments, whereas the right panel displays terminal value distributions from 20,000 Monte Carlo simulations.

\subsubsection{Analysis}
The numerical experiments clearly demonstrate that the variable exponent framework suggested by \eqref{eq.1} provides considerable advantages over the GBM model in terms of coffering a flexible framework for volatility modeling while maintaining the essential characteristics of the GBM model almost identically. We strongly believe that the model can be used modeling real-world phenomena across different disciplines. For example,  Figure \ref{fig3:cir_p3} shows a financial application of implied volatility: comparison of implied volatility patterns shows $p_1(\cdot)$  generates a realistic volatility smile while $p_2(\cdot)$  remains nearly flat across strikes. The contrasting behaviors show the model's flexibility across the different regimes: $p_1(\cdot)$  captures market realities with significant state-dependence, while $p_2(\cdot)$  converges to GBM behavior since $p_2(\cdot)$ deviates from $1$ by only $\approx 10^{-3}$.
We'd also like to mention that the model is computationally tractable using standard discretization methods, such as the log-transformed Milstein or Euler-Maruyama schemes, both of which ensure its numerical stability and efficiency.

\begin{figure}[htbp!]
  \centering
  \includegraphics[width=1.05\textwidth]{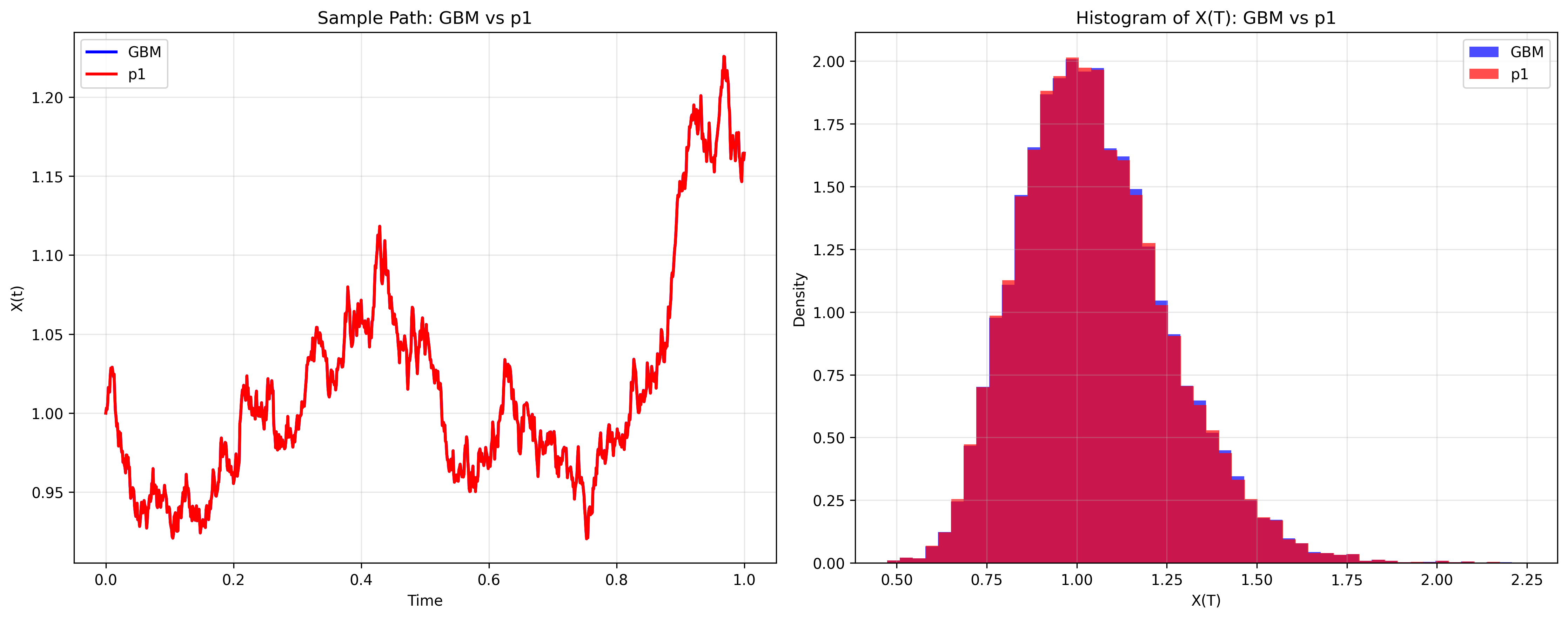}
  \caption{$p_1(\cdot)$ vs GBM.}  \label{fig1:cir_p1}
\end{figure}
\begin{figure}[htbp!]
  \includegraphics[width=1.05\textwidth]{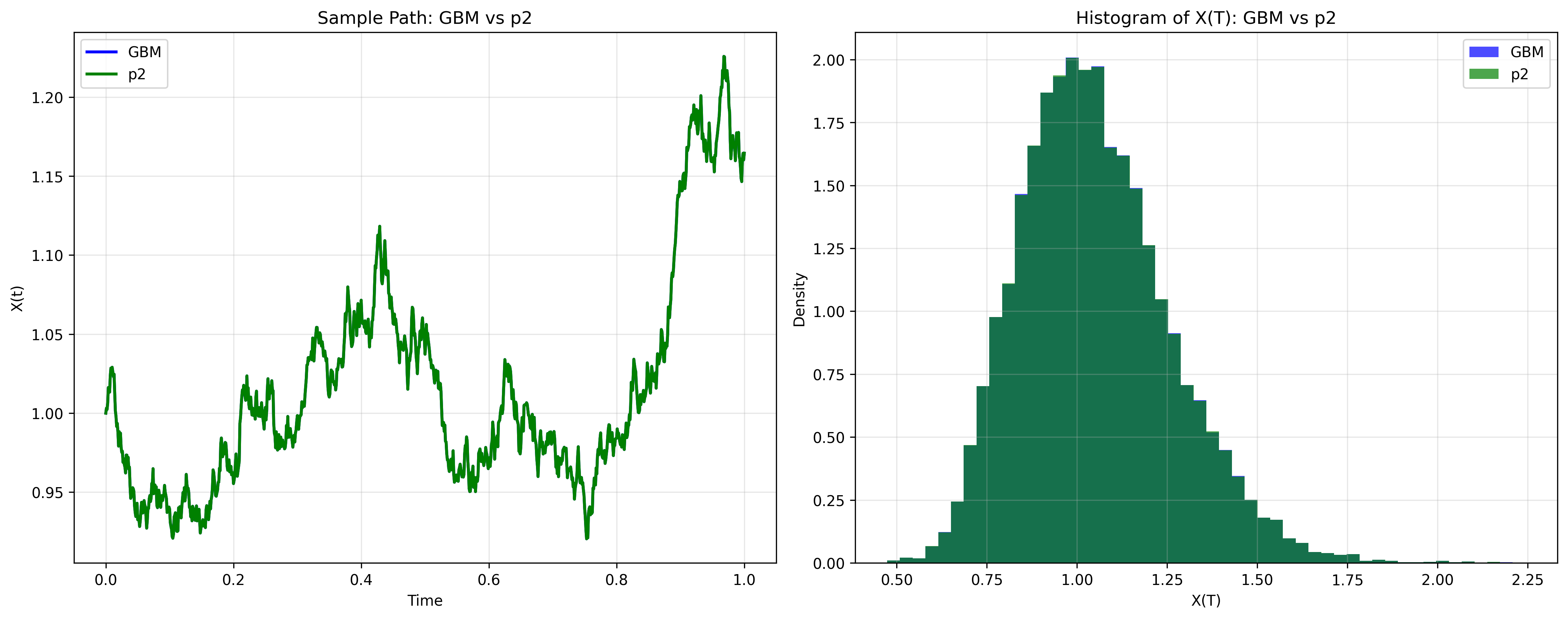}
  \caption{$p_2(\cdot)$ vs GBM.}  \label{fig2:cir_p2}
\end{figure}
\begin{figure}[htbp!]
  \includegraphics[width=1.07\textwidth]{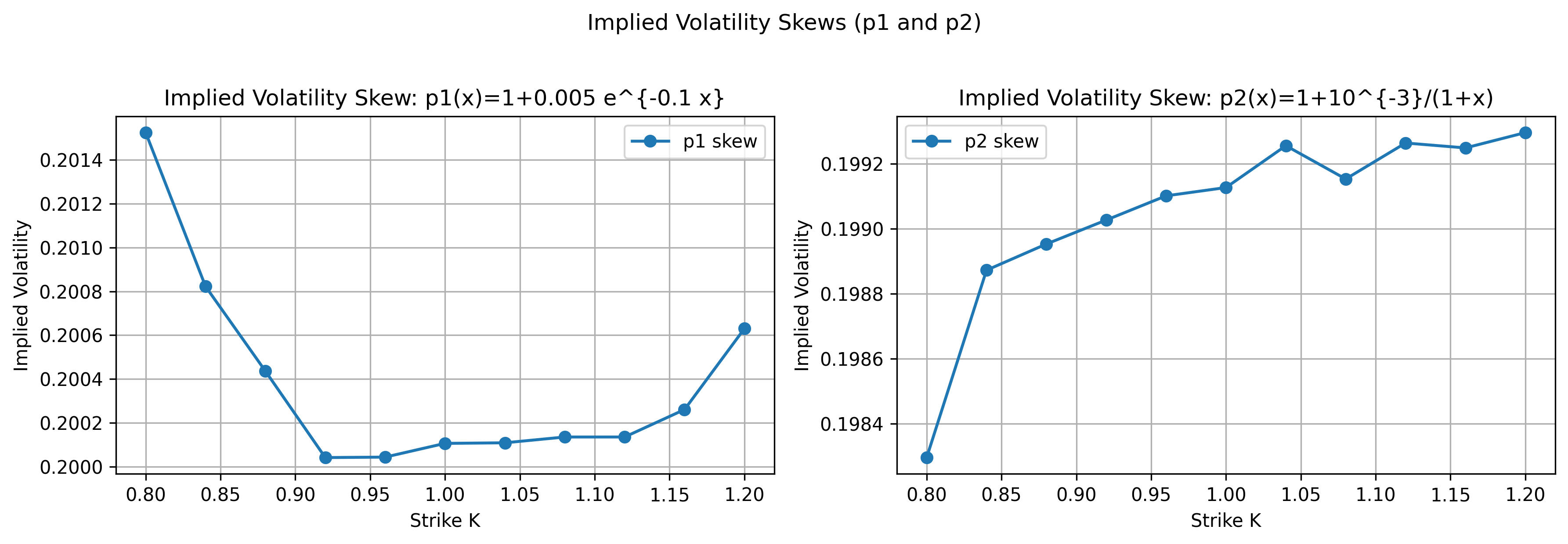}
  \caption{Implied-volatility skews at maturity T=1 estimated from Monte Carlo call prices.} \label{fig3:cir_p3}
\end{figure}

\begin{table}
\begin{center}
\begin{tabular}{ |c|c|c| }
\hline
Function & Volatility Range & $\mathcal{E}=\mathbb{E}\left[\sup_{t\in[0,T]}|X(t)_{\mathcal{P}}-X(t)_{\text{GBM}}|\right]$ \\
\hline
$p_1(x)$ & [0.0198, 1.0049]   & 0.000145  \\
$p_2(x)$ & [0.0200, 1.0003]   & 0.000015  \\
\hline
\end{tabular}
\caption{Summary of numerical error estimates}
\label{table:2}
\end{center}
\end{table}

\newpage
\section{Conclusions}\label{sec5}

The introduced model \eqref{eq.1} offers a mathematically sound and computationally feasible approach to modeling systems where volatility structure depends on the underlying state variable. This new variable exponent framework provides significant improvements over classical constant-volatility models. Choosing the form of $p(\cdot)$ enables precise calibration to empirical features or theoretical constraints. The analytical error estimates (Table \ref{table:1}) are consistent with the actual errors observed in simulations (Table \ref{table:2}) confirming that the proposed model represents a reasonable generalization of the classical GBM framework.

\section{Appendix} \label{Sec6}
In this section, we present both the Itô and Stratonovich interpretations of the problem and provide a detailed comparative analysis of their respective formulations.\\
In stochastic calculus, the It\^o and Stratonovich integrals represent two distinct interpretations of SDEs. Thus, we can present our problem by using either
the  It\^o form, i.e. \eqref{eq.1}, or the Stratonovich form
\begin{align}\label{eq.69}
dX(t) &= \mu X(t)dt + \sigma X(t)^{p(X(t))}\circ dW(t). \tag{{${\mathcal{ST}}$}}
\end{align}
Although both formulations are mathematically sound, they produce different outcomes when applied to models with state-dependent diffusion coefficients. Nevertheless, by employing the conversion formula, we can express the Itô counterpart \eqref{eq.70} of \eqref{eq.69} as follows
\begin{align}\label{eq.70}
dX(t) &= \left(\mu X(t) +  h(X(t))\right) dt + \sigma X(t)^{p(X(t))} dW(t), \tag{{${\mathcal{ST_I}}$}}
\end{align}
where
\begin{align}\label{eq.70a}
h(X(t))=\frac{1}{2} \xi^2 X(t)^{2p(X(t))} \left(p^{\prime}(X(t)) \log X(t) + \frac{p(X(t))}{X(t)}\right).
\end{align}
is the drift-correction term. Given that \eqref{eq.70} is mathematically equivalent to \eqref{eq.69}, it suffices to compare \eqref{eq.1} with \eqref{eq.70} to highlight the differences in the resulting behavior.\\
We begin by noting that \eqref{eq.70} is well-defined. Specifically, if the process $X(t)$ satisfies \eqref{eq.70}, then under the assumptions of Lemma \ref{Lem:2.1}, $X(t)$ remains strictly positive for all $t \in [0,T]$. This can be established by applying Feller’s non-attainability test to \eqref{eq.70}, following an argument analogous to that used in Lemma \ref{Lem:2.1}. Indeed, let
\begin{align}\label{eq.71}
\mathcal{S}_{p}(x)&:=\mu x +\frac{1}{2}\sigma^2 x^{2p(x)}\left(p^{\prime}(x)\log(x)+\frac{p(x)}{x}\right)\nonumber \\
&-\sigma^2 x^{2p(x)}\left(p^{\prime}(x)\log(x)+\frac{p(x)}{x}\right).
\end{align}
Then,
\begin{align}\label{eq.71a}
\mathcal{S}_{p}(x) \geq \mu x-\frac{1}{2}\sigma^2 \left(|p^{\prime}(x)||\log(x)|x^{2p(x)}+x^{2p(x)-1}p(x)\right),
\end{align}
which yields, for $0<x\leq\delta<1$,
\begin{align}\label{eq.72}
\lim_{x \to 0^+}\mathcal{S}_{p}(x)& \geq \lim_{x \to 0^+} \left[\mu x-\frac{1}{2}\sigma^2 \left(M_0|\log(x)|x^{2p(x)}+x^{2p(x)-1}p^+\right)\right]=0\geq 0.
\end{align}
Therefore, the diffusion process $X(t)$ satisfying \eqref{eq.70} never reaches zero a.s. when initiated in $(0,\infty)$. Consequently, \eqref{eq.70} is well-defined.\\
In what follows, in order to analyze moment dynamics and emphasize their structural differences, we derive the formal equations governing the evolution of the mean and variance for both models.\\
Assume $X(t)$ (with $X(0)>0$) be the solution to \eqref{eq.1} and $X_s(t)$ (with $X_s(0)>0$) be the solution to \eqref{eq.70}, with their respective means $\mathbb{E}X(t)$ and $\mathbb{E}X_s(t)$, $t \in [0,T]$. Now, taking expectations and using the zero mean property of the It\^{o} integral yields
\begin{align}\label{eq.73}
\frac{d}{dt}\mathbb{E}X(t) &= \mu \mathbb{E}X(t),
\end{align}
and
\begin{align}\label{eq.74}
\frac{d}{dt}\mathbb{E}X_s(t) &= \mu \mathbb{E}X_s(t) + \mathbb{E} h(X_s(t)).
\end{align}
Note that the sign of $\mathbb{E} h(X)$ is determined by the state-dependent function
\begin{align}\label{eq.74a}
 X \mapsto p^{\prime}(X) \log X+ \frac{p(X)}{X}=\frac{d}{dX}\left[p(X) \log X \right].
\end{align}
Thus,
\begin{equation}\label{eq.75}
\mathbb{E} h(X) \begin{cases} > 0 , & \text{if }\, p(X)\log X \text{ is strictly increasing}, \\ <0, & \text{if  }\, p(X)\log X \text{ is strictly decreasing}. \end{cases}
\end{equation}
Therefore, \eqref{eq.70} will yield a mean that differs from that of \eqref{eq.1}, producing either a lower or higher value. The two approaches will generate identical means only when $p(X)\log X$ remains constant, causing the models to be equivalent. This condition, however, is not the case under assumptions $(\mathbf{p_1})$-$(\mathbf{p_2})$.\\
Next, consider the second moments $\mathbb{E}X^2(t)$ and $\mathbb{E}X^2_s(t)$, $t \in [0,T]$, of \eqref{eq.1} and \eqref{eq.70},  respectively.
Applying the It\^o formula for \eqref{eq.1} and \eqref{eq.70}  yields
\begin{align}\label{eq.76}
dX^{2}(t) &= \left[2\mu X^2(t)+\sigma^{2}X(t)^{2p(X(t))}\right]dt + 2\sigma X(t)^{p(X(t))+1} dW(t),
\end{align}
and
\begin{align}\label{eq.77}
dX_s^{2}(t) &= \left[2\mu X_s^2(t)+\sigma^{2}X_s(t)^{2p(X_s(t))}+2X_s(t)h(X_s(t))\right]dt\nonumber \\
& + 2\sigma X_s(t)^{p(X_s(t))+1} dW(t),
\end{align}
respectively. Taking expectations gives
\begin{align}\label{eq.78}
\frac{d}{dt}\mathbb{E}X^2(t) &= \mathbb{E}\left[2\mu X^2(t)+\sigma^{2}X(t)^{2p(X(t))}\right],
\end{align}
and
\begin{align}\label{eq.79}
\frac{d}{dt}\mathbb{E}X^2_s(t) &= \mathbb{E}\left[2\mu X_s^2(t)+\sigma^{2}X_s(t)^{2p(X_s(t))}\right]+\mathbb{E}\left[2X_s(t)h(X_s(t))\right].
\end{align}
The second moment's rate of change under \eqref{eq.70}  includes an additional term \newline $\mathbb{E}\left[2X_s(t)h(X_s(t))\right]$. This additional contribution means that the two interpretations produce distinct second moment dynamics and, therefore, different variance behavior.

\begin{remark}
The selection between It\^o and Stratonovich interpretations carries significant practical implications beyond mere technical considerations, affecting both the behavior and analysis of stochastic models. The It\^o interpretation remains the standard choice due to its compatibility with martingale theory in option pricing and its superior analytical properties for establishing moment bounds and ensuring positivity under suitable conditions. Conversely, the Stratonovich formulation proves more appropriate for physical systems featuring parametric noise (i.e. $X(t)^{p(X(t))}$) or multiplicative noise structures, where it maintains important symmetries including the standard chain rule, despite the martingale property being less relevant in such contexts.
\end{remark}

\section*{Data usage statement}
\noindent All data used in the numerical experiments are generated by Monte Carlo simulation, no proprietary or external datasets were used. All figures and distributions obtained purely from simulating the stochastic differential equations studied in the paper by using Python programming language.

\section*{Acknowledgments}
This work was supported by Athabasca University Research Incentive Account [140111 RIA].
\section*{ORCID}
https://orcid.org/0000-0002-6001-627X

\end{document}